\definecolor{orange}{RGB}{255,127,0}
\theoremstyle{plain}
\newtheorem{thm}{Theorem}
\newtheorem{cor}{Corollary}
\newtheorem{lem}[cor]{Lemma}
\newtheorem{prop}[cor]{Proposition}
\theoremstyle{definition}
\newtheorem{definition}[cor]{Definition}
\numberwithin{cor}{section}
\numberwithin{equation}{section}
\newcommand{\R}{\mathbb{R}}
\renewcommand{\d}{d}
\newcommand{\Rd}{\mathbb R^\d}
\newcommand{\ep}{\varepsilon}
\renewcommand{\P}{\mathbb{P}}
\renewcommand{\O}{\Omega}
\newcommand{\F}{\mathcal{F}}
\renewcommand{\L}{\mathcal{L}}
\newcommand{\Cs}{\mathcal{C}}
\renewcommand{\tilde}{\widetilde}
\renewcommand{\bar}{\overline}
\newcommand{\BS}{\mathcal{U}_0}
\newcommand{\od}[1]{#1^\star}
\newcommand{\Sy}{{\mathbb{S}^\d}}
\newcommand{\Pu}{{\mathcal{P}}}
\newcommand{\m}{m}  
\DeclareMathOperator{\tr}{tr}
\DeclareMathOperator{\dist}{dist}
\DeclareMathOperator*{\esssup}{ess\,sup}
\DeclareMathOperator*{\essinf}{ess\,inf}
\begin{document}

\title[Stochastic homogenization of fully nonlinear equations]{Stochastic homogenization of fully nonlinear uniformly elliptic equations revisited}

\author[S. N. Armstrong]{Scott N. Armstrong}
\address{Department of Mathematics, University of Wisconsin, 
Madison, WI 53706 \\ and Ceremade (UMR CNRS 7534), Universit\'e Paris-Dauphine, Paris, France}
\email{armstrong@ceremade.dauphine.fr}

\author[C. K. Smart]{Charles K. Smart}
\address{Department of Mathematics, Massachusetts Institute of
Technology, Cambridge, MA 02139}
\email{smart@math.mit.edu}

\date{\today}

\keywords{stochastic homogenization, fully nonlinear uniformly equation}
\subjclass[2010]{35B27}

\begin{abstract}
We give a simplified presentation of the obstacle problem approach to stochastic homogenization for elliptic equations in nondivergence form. Our argument also applies to equations which depend on the gradient of the unknown function. In the latter case, we overcome difficulties caused by a lack of estimates for the first derivatives of approximate correctors by modifying the perturbed test function argument to take advantage of the spreading of the contact set. 
\end{abstract}

\maketitle

\section{Introduction} \label{I}

In this short article we present a simplified proof of the homogenization of nondivergence form uniformly elliptic equations in stationary-ergodic random media and clarify the result for equations with dependence on the gradient of the unknown function. The argument is via the obstacle method introduced by Caffarelli, Souganidis and Wang~\cite{CSW}.

We consider fully nonlinear equations of the form
\begin{equation}\label{e.pde}
F\!\left( D^2u^\ep,Du^\ep, \frac x\ep, \omega \right) = 0 \quad \mbox{in} \ U \subseteq \Rd,
\end{equation}
where $F$ is a uniformly elliptic, Lipschitz continuous, stationary-ergodic operator $F$ (the precise assumptions are given below). The homogenization result (Theorem~\ref{H} below) states that, almost surely, the solutions $u^\ep(x,\omega)$ of~\eqref{e.pde}, subject to an appropriate boundary condition, converge uniformly as $\ep \to 0$ to the (deterministic) solution $u$ of
\begin{equation*}\label{}
\overline F(D^2u,Du) = 0 \quad \mbox{in} \ U, 
\end{equation*}
for a uniformly elliptic operator $\overline F$.

A result like this was first proved in the fully nonlinear setting by Caffarelli, Souganidis and Wang~\cite{CSW}, who introduced a new method for obtaining stochastic homogenization of nonlinear equations based on an obstacle problem. They observed that, while the ``free" solutions of fully nonlinear equations do not possess an obvious linear or subadditive structure, which is needed to apply the ergodic theorem and thus to homogenize, the corresponding obstacle problem solutions do. Using clever arguments based on the regularity theory for such equations, they were then able to control the ``free" solutions with those of the obstacle problem sufficiently well to obtain almost sure homogenization in the case that $F$ does not depend on the gradient $Du^\ep$.

In the general case that $F$ may depend on $Du^\ep$, the arguments of~\cite{CSW} only imply that the ``approximate correctors" (the solutions of~\eqref{e.two} below) homogenize \emph{in probability}, and it has been an open problem to obtain the fully homogenization result in the almost sure sense. The trouble is that uniform bounds on the gradients of the ``approximate correctors," which are necessary for a straightforward application of the perturbed test function argument, are not easy to obtain: see the discussion on page 347 of~\cite{CSW}.

In the present paper, we resolve the difficulty with the gradient dependence and give the first complete proof of almost sure homogenization for general equations of the form~\eqref{e.pde}. The idea is to obtain the desired gradient bounds for a new approximate corrector, constructed by approximating the obstacle problem solutions by their infimal convolutions and then using the fact that the relevant contact sets spread evenly. Even in the gradient-independent setting, our approach permits us to give a considerably simplified presentation of the results in~\cite{CSW}.  

We proceed with the precise statement of the homogenization result.

\subsection*{The assumptions}

We consider Euclidean space $\Rd$ in dimension $\d\geq 1$. The \emph{random environment} consists of a given probability space $(\O,\F,\P)$ and a measure-preserving ergodic action $\tau = \left( \tau_y \right)_{y\in \Rd}$ of $\Rd$ on $\O$. Precisely, $\tau_y:\O\to\O$ is an $\F$-measurable map such that $\P\circ \tau_y = \P$ and $\tau_y\circ\tau_z = \tau_{y+z}$ for all $y,z\in \Rd$ and
\begin{equation}\label{erg}
\tau_y A = A \ \ \mbox{for every} \ y\in \Rd \qquad \mbox{implies that} \qquad \P[A] = 0 \ \ \mbox{or} \ \ \P[A] =1.
\end{equation}

We require that the fully nonlinear operator $F:\Sy\times \Rd \times \Rd \times \O \to \R$ satisfies each of the following three conditions:

\begin{enumerate}

\item[(F1)] {\it Stationarity and ergodicity}: for all $(M,p,\omega)\in \Sy\times \Rd \times \O$ and $y,z\in \Rd$,
\begin{equation*}\label{}
F(M,p,y,\tau_z\omega) = F(M,p,y+z,\omega),
\end{equation*}
where $\tau=(\tau_y)_{y\in\Rd}$ is as above and in particular  satisfies~\eqref{erg}.

\medskip

\item[(F2)] {\it Uniform ellipticity and Lipschitz continuity}: there exist constants $\gamma > 0$ and $0 < \lambda \leq \Lambda$ such that, for all $(M,p,z), (N,q,w) \in \Sy\times \Rd\times \R$ and $(y,\omega)\in\Rd\times\O$,
\begin{multline*}
\qquad \qquad \Pu^{-}_{\lambda,\Lambda}(M-N) - \gamma |p-q| \leq F(M,p,y,\omega) - F(N,q,y,\omega)\\ \leq \Pu^{+}_{\lambda,\Lambda} (M-N) + \gamma |p-q|.
\end{multline*}
\end{enumerate}

\noindent Here $\Pu^{\pm}_{\lambda,\Lambda}$ are the usual Pucci extremal operators, defined for each $M\in \Sy$ by
\begin{equation*}\label{}
\Pu^+_{\lambda,\Lambda}(M) := \Lambda \tr(M_-) - \lambda \tr(M_+) \quad \mbox{and} \quad\Pu^-_{\lambda,\Lambda}(M) : = \lambda \tr(M_-) - \Lambda \tr(M_+),
\end{equation*}
where $M_\pm$ are such that $M_\pm \geq 0$, $M= M_+-M_-$ and $M_-M_+ = 0$.

\begin{enumerate}
\item[(F3)] {\it Regularity and boundedness in the microscopic variable}: for every $R> 0$,
\begin{multline*}\label{}
\qquad \qquad \left\{ F(M,p,\cdot,\omega) \,:\, (M,p,\omega)\in\Sy\times\Rd\times\O, \ |M|, |p| \leq R \right\} \\ \mbox{is uniformly bounded and equicontinuous on} \ \ \Rd.
\end{multline*}
Moreover, there exists a modulus $\rho:[0,\infty) \to [0,\infty)$ and a constant $\sigma > \frac12$ such that, for all $(M,p,\omega)\in \Sy\times \Rd\times \Omega$ and $y,z\in \Rd$,
\begin{equation*}\label{}
\left|F(M,p,y,\omega) - F(M,p,z,\omega) \right| \leq \rho\big( (1+|M|)|y-z|^\sigma \big).
\end{equation*} 
\end{enumerate}
The reason for the last statement of (F3) is that, in light of (F1), it implies that the comparison principle holds for each of the operators $F(\cdot,\cdot,\cdot,\omega)$ with $\omega\in \Omega$ (see~\cite{CIL}).

\subsection*{The main result}

We state the homogenization result for the Dirichlet problem
\begin{equation}\label{bvp}
\left\{ \begin{aligned} 
&  F\!\left(D^2u^\ep,Du^\ep,\frac x\ep,\omega\right) = 0 & \mbox{in} & \ U, \\
& u^\ep = g & \mbox{on} & \ \partial U.
\end{aligned} \right.
\end{equation}
Here $U \subseteq \Rd$ is a bounded Lipschitz domain and $g\in C(\partial U)$, and the equation is understood in the viscosity sense (see~\cite{CIL,CC}).

By straightforward modifications of our argument, we may homogenize essentially any other well-posed problem involving the operator $F$, including parabolic equations subject to appropriate boundary and/or initial conditions. The arguments also extend easily to equations with more general dependence, such as
\begin{equation*}\label{}
F\!\left(D^2u^\ep,Du^\ep, u^\ep, x, \frac x\ep,\omega \right) = 0,
\end{equation*}
as well as, for example, equations with quadratic dependence in the gradient. Since these extensions present no additional difficulties, we focus on~\eqref{bvp} to avoid burdensome notation. 

\begin{thm}\label{H}
Assume (F1), (F2) and (F3). Then there exists an event $\Omega_0\in \F$ of full probability and a function $\overline F:\Sy \times \Rd  \to \R$ which satisfies 
\begin{equation*}\label{}
\Pu^-_{\lambda,\Lambda}(M-N) - \gamma |p-q|  \leq \overline F(M,p) - \overline F(N,q) \leq \Pu^+_{\lambda,\Lambda} (M-N) + \gamma|p-q|
\end{equation*}
such that, for every $\omega\in \Omega_0$, every bounded Lipschitz domain $U\subseteq \Rd$ and each $g\in C(\partial U)$, the unique solution $u^\ep$ of the boundary value problem~\eqref{bvp} satisfies
\begin{equation*}\label{}
\lim_{\ep \to 0} \sup_{x\in U} \left| u^\ep(x,\omega) - u (x)\right| = 0,
\end{equation*}
where $u\in C(\overline U)$ is the unique solution of the Dirichlet problem
\begin{equation}\label{Ee}
\left\{ \begin{aligned} 
& \overline F(D^2u,Du) = 0 & \mbox{in} & \ U, \\
& u = g & \mbox{on} & \ \partial U.
\end{aligned} \right.
\end{equation}
\end{thm}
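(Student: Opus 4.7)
The plan is to adapt the obstacle-problem method of~\cite{CSW}. To construct $\bar F$, I fix $(M,p)\in\Sy\times\Rd$ and $\mu\in\R$, and for each cube $Q_R$ consider the obstacle problem of finding the maximal viscosity subsolution $w_R = w_R^{M,p,\mu}(\cdot,\omega)$ of
\[
F(D^2w + M, Dw + p, y, \omega) \ge \mu \quad\text{in } Q_R, \qquad w \le 0, \quad w|_{\partial Q_R} = 0.
\]
Uniform ellipticity (F2) and the ABP estimate give $-CR \le w_R \le 0$. Restriction to subcubes, together with stationarity (F1) and the comparison principle, imply that an appropriate stationary functional of $w_R$ (such as $|Q_R|^{-1}\int_{Q_R}(-w_R)$) is nearly subadditive in $R$. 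The subadditive ergodic theorem then yields, for each rational $(M,p,\mu)$, an almost sure deterministic limit $m(M,p,\mu)$, and one defines $\bar F(M,p)$ as the unique threshold value of $\mu$ separating the regime of ``large contact set'' ($m(M,p,\mu)=0$, the free set $\{w_R<0\}$ shrinks to density zero) from the regime of ``large free set'' ($m(M,p,\mu)>0$). The Pucci ellipticity and Lipschitz bounds on $\bar F$ then follow by comparing obstacle problems for $(M,p)$ with those for $(N,q)$ via~(F2).

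For the convergence step I would use the perturbed test function method. Let $\bar u^*$ be the upper half-relaxed limit of $u^\ep$, and suppose $\phi\in C^2$ touches $\bar u^*$ from above at $x_0\in U$. Setting $M_0 := D^2\phi(x_0)$, $p_0 := D\phi(x_0)$, assume for contradiction that $\bar F(M_0,p_0) > 2\theta > 0$. For $\mu = \theta < \bar F(M_0,p_0)$ the obstacle-problem solution $w_R := w_R^{M_0,p_0,\theta}$ therefore has contact set $\{w_R = 0\}$ of positive asymptotic density in $Q_R$. Because no gradient estimate on $w_R$ is available, I replace it by its infimal convolution
\[
w_R^\delta(y) := \inf_{z}\Bigl( w_R(z) + \tfrac{1}{2\delta}|y-z|^2 \Bigr),
\]
which is semiconcave and Lipschitz with constant $C/\sqrt{\delta}$. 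The corrected test function $\phi^\ep(x) := \phi(x) + \ep^2 w_R^\delta(x/\ep)$, with $R \sim 1/\ep$, has controlled gradient $D\phi + \ep Dw_R^\delta(\cdot/\ep)$, and standard maximum-principle arguments produce a sequence of touching points $x^\ep \to x_0$ of $u^\ep - \phi^\ep$ from above.

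The main obstacle, and where the new idea enters, is matching the equation satisfied by $\phi^\ep$ at $x^\ep$ against the obstacle inequality without invoking pointwise gradient control on $w_R$ itself. At a point $y$ where the inf-convolution $w_R^\delta(y)$ is attained at a minimizer $z=z(y)$, the identity $Dw_R^\delta(y) = (y-z)/\delta$ together with $|y-z|\le C\sqrt{\delta}$ and~(F3) imply that the equation for $w_R^\delta$ near $y$ agrees with the equation for $w_R$ near $z$ up to a small error in the spatial variable of $F$. If in addition $z \in \{w_R=0\}$, the constant function $0$ is a subsolution at $z$, so $F(M_0,p_0,z,\omega) \ge \theta$; this inequality transfers through the inf-convolution to yield $F(D^2\phi^\ep(x^\ep), D\phi^\ep(x^\ep), x^\ep/\ep, \omega) \ge \theta - o(1)$. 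On the other hand, since $u^\ep$ is a solution and $\phi^\ep$ touches from above, the viscosity supersolution inequality gives $F(D^2\phi^\ep(x^\ep), D\phi^\ep(x^\ep), x^\ep/\ep, \omega) \le 0$, a contradiction. Selecting $x^\ep$ so that $x^\ep/\ep$ lies both in the inf-convolution ``good set'' and over a point $z$ of the contact set $\{w_R=0\}$ is possible precisely because the contact set has positive density --- it ``spreads evenly'' across $Q_R$ --- which is the key new observation replacing the missing gradient bound. Sending $\ep \to 0$ then $\delta \to 0$ closes the argument, and the reverse inequality follows symmetrically with supremal convolutions. A countable intersection over rational $(M,p,\mu)$ then produces a single full-probability event $\O_0$, and the Dirichlet boundary condition is recovered from standard barrier constructions using the ellipticity of $\bar F$.
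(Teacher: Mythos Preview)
Your overall architecture matches the paper's: obstacle problem, subadditive ergodic theorem applied to the contact set, infimal convolution of the obstacle solution, and the spreading of the contact set as the substitute for a gradient estimate. However, the mechanism by which you invoke spreading is not the right one, and as written the argument has a genuine gap.

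The problem is the sentence ``Selecting $x^\ep$ so that $x^\ep/\ep$ lies both in the inf-convolution `good set' and over a point $z$ of the contact set $\{w_R=0\}$ is possible precisely because the contact set has positive density.'' You cannot select $x^\ep$: it is the location of the maximum of $u^\ep-\phi^\ep$, over which you have no control. Likewise, the minimizer $z=z(y)$ in the infimal convolution at $y=x^\ep/\ep$ has no reason to lie in the contact set; for a nonpositive $w_R$ the minimizer is pulled toward points where $w_R$ is \emph{most negative}, not toward $\{w_R=0\}$. Positive density of the contact set says nothing about a single specific point. Your generic Lipschitz bound $|Dw_R^\delta|\le C/\sqrt{\delta}$ is also mis-scaled (the oscillation of $w_R$ on a domain of size $R\sim 1/\ep$ is of order $R^2$, not $O(1)$), and in any case does not yield $\ep|Dw_R^\delta(\cdot/\ep)|\to 0$.

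The paper's fix is different and sharper. One first proves the pointwise estimate
\[
\bigl|Dw^\delta(y)\bigr|\le \frac{1}{\delta}\,\dist\bigl(y,\Cs\bigr),
\]
which follows because the infimum in the definition of $w^\delta(y)$ is already no larger than $\tfrac{1}{2\delta}\dist(y,\Cs)^2$ (take $z$ in the contact set), forcing the actual minimizer to satisfy $|y-z|\le\dist(y,\Cs)$. Then the spreading lemma (the contact set meets every ball of radius $\eta/\ep$ inside $\tfrac1\ep V$) gives $\dist(y,\Cs)\le 2\eta/\ep$ \emph{uniformly in $y$}, hence $\ep|Dw^\delta(y)|\le 2\eta/\delta$ for all $y$. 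Sending $\ep\to 0$ and then $\eta\to 0$ yields $\ep Dw^\delta\to 0$ for each fixed $\delta$. This uniform gradient smallness is what lets the perturbed test function argument go through at the \emph{arbitrary} touching point: the equation $F(M+D^2w^\delta,p,y,\omega)\ge a-c_\delta$, which holds everywhere (not only on the contact set) by the standard transfer of the supersolution property under infimal convolution, can then be upgraded to $F(D^2\phi^{\ep,\delta},D\phi^{\ep,\delta},x/\ep,\omega)\ge a-\eta$ using the Lipschitz dependence of $F$ on the gradient slot. In short, spreading is used to bound $\dist(\cdot,\Cs)$ uniformly, not to place the touching point.
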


\subsection*{Literature review}
The homogenization of elliptic equations in random media originated in the work of Papanicolaou and Varadhan~\cite{PV1,PV2} and Kozlov~\cite{K1,K2} about three decades ago. Linear equations are somewhat simpler to analyze since they possess a dual structure. Indeed, the method of~\cite{PV1,PV2} relies heavily on the existence of invariant measures, which are unavailable in the nonlinear setting. The obstacle method of~\cite{CSW} has since been used by Caffarelli and Souganidis~\cite{CS} to obtain a quantitative homogenization result for fully nonlinear equations under a mixing hypothesis, including a logarithmic rate of convergence, by Schwab~\cite{Sc2} in the setting of (nonlinear) nonlocal equations, and by the authors~\cite{AS} for fully nonlinear equations which are not uniformly elliptic.

\subsection*{Outline of the paper}
In the next section we briefly sketch the main ideas, introduced in~\cite{CSW}, underlying the obstacle problem approach to homogenization. In Section~\ref{PO} we give a succinct construction of the effective equation and demonstrate several of its inherited properties, including uniform ellipticity. In the last section we present the proof of Theorem~\ref{H} based on the perturbed test function method.

\section{A brief overview of the main ideas}

To summarize the concepts underlying the homogenization argument, we drop dependence on the gradient and consider the problem 
\begin{equation}\label{e.one}
\left\{ \begin{aligned}
& F\left(D^2u^\ep,\frac x\ep,\omega\right) = \alpha & \mbox{in} & \ B_1, \\
& u^\ep = 0 & \mbox{on} & \ \partial B_1.
\end{aligned} \right.
\end{equation}
If we rescale so that the microscopic scale is of unit order, we obtain the problem 
\begin{equation}\label{e.two}
\left\{ \begin{aligned}
& F\left(D^2u_r,y,\omega\right) = \alpha & \mbox{in} & \ B_r, \\
& u_r = 0 & \mbox{on} & \ \partial B_r,
\end{aligned} \right.
\end{equation}
for $r> 0$ very large. If~\eqref{e.one} homogenizes, then in terms of~\eqref{e.two} this means that $u_r(0) \approx r^2 f(\alpha)$ for large $r$, where $f$ is a strictly increasing function of $\alpha$. Assuming we could prove that $r^{-2} u_r(0) \rightarrow f(\alpha)$ as $r\to \infty$, we could then identify $\overline F(0)$ as the (necessarily unique) value of $\alpha$ for which $f(\alpha) = 0$. With this choice of $\alpha$, $u_r$ is ``flat" in the sense that $r^{-2} u_r(0) \approx 0$, and it turns out that this is precisely what we need to prove homogenization by the perturbed test function method. In short, it says that, for large $r$, $u_r$ is a  ``good approximate corrector." 

The main difficulty is precisely to show that $r^{-2} u_r(0)$ has a limit as $r\to \infty$, since the problem~\eqref{e.two} does not possess a structure amenable to the ergodic theorem. The idea of~\cite{CSW} is to instead consider the \emph{obstacle problem}
\begin{equation}\label{e.three}
\left\{ \begin{aligned}
& \min \left\{ F\left(D^2v_r,y,\omega\right)- \alpha, v_r\right\} = 0 & \mbox{in} & \ B_r, \\
& v_r = 0 & \mbox{on} & \ \partial B_r.
\end{aligned} \right.
\end{equation}
Clearly the solution of~\eqref{e.three} satisfies $v_r \geq 0$, since the obstacle (the zero function) prohibits it from being negative. Where it is positive, $v_r$ is unconstrained and so solves the same equation as the one for $u_r$. We therefore think of $v_r$ as being similar to $u_r$, but with some additional ``help" staying nonnegative. The amount of ``help" can be measured in terms of the Lebesgue measure of the \emph{contact set} $\{ v_r =0 \}$, and a crucial observation of~\cite{CSW} is that, due to the comparison principle, \emph{this quantity is subadditive}. Therefore, the ergodic theorem applies and we can conclude that the contact set takes up a deterministic proportion of $B_r$ as $r\to \infty$. 

To identify $\overline F(0)$, we start from $\alpha=-\infty$ and increase $\alpha$ until $v_r$ ``doesn't need help" staying nonnegative, that is, until the limiting proportion of the contact set vanishes for the first time. Using the regularity theory for uniformly elliptic equations and comparing $v_r$ to $u_r$ with the ABP inequality, it can then be shown that, for precisely this value of $\alpha$, $\lim_{r\to \infty} r^{-2} u_r(0) = 0$, as desired.

The extra difficulty that occurs if $F$ depends on the gradient is that in this case the perturbed test function method also requires that $r^{-1} Du_r(0) \rightarrow 0$ as $r\to \infty$. Obtaining the analogue of this condition is easy for periodic homogenization, but in the random setting the standard elliptic estimates do not yield it. To resolve this issue, we introduce infimal convolution approximations of $v_r$ and use them as ``approximate one-sided correctors" in the perturbed test function argument to gain extra control over the gradient. We take advantage of the fact that contact set ``spreads evenly" on large scales (see Lemma~\ref{l.spread}) to show that these approximations satisfy precisely the required gradient bound. Unlike~\cite{CSW}, we make no use of the ``free" problem~\eqref{e.two} in our proof of homogenization.

\section{The obstacle problem and the identification of $\overline F$} \label{PO}

In this section, following the ideas of~\cite{CSW}, we construct the effective operator $\overline F$ by applying the subadditive ergodic theorem to a quantity involving the obstacle problem.

\subsection*{The obstacle problem}

We begin with a discussion of the basic properties of the obstacle problem. Succinct proofs of the following standard facts can be found for example in~\cite{CSW} as well as the appendix of~\cite{AS}. The obstacle problem (with the zero function as the obstacle) is:
\begin{equation}\label{e.o}
 \min\left\{ F(D^2u,0,y,\omega) , u \right\} = 0.
\end{equation}
It is easy to see that~\eqref{e.o} satisfies a comparison principle. That is, if $V\in \L$ ($:=$ set of bounded Lipschitz domains of $\Rd$) and $u_1,-u_2\in C\left(\overline V\right)$ are such that
\begin{equation*}\label{}
 \min\left\{ F(D^2u_1,0,y,\omega) , u_1 \right\} \leq 0 \leq  \min\left\{ F(D^2u_2,0,y,\omega) , u_2 \right\} \quad \mbox{in} \ V,
\end{equation*}
then $u_1\leq u_2$ on $\partial V$ implies that $u_1 \leq u_2$ in $V$. The Perron method (with the help of some standard boundary barriers) then yields, for each $V\in \L$, a unique viscosity solution $w(\cdot,\omega\,;V,F) \in C(\overline V)$ of the boundary value problem
\begin{equation}\label{obst}
\left\{ \begin{aligned} 
& \min\left\{ F(D^2w,0,y,\omega) , w \right\} = 0 & \mbox{in} & \ V, \\
& w = 0 & \mbox{on} & \ \partial V.
\end{aligned} \right.
\end{equation}
The function $w$ can be identified either as the minimal nonnegative supersolution of $F(D^2u,0,y,\omega) \geq 0$, or alternatively as the maximal subsolution of $F(D^2u,0,y,\omega) \leq k \chi_{\{ u \leq 0 \}}$ that is nonpositive on $\partial V$, where $k:= \sup_{y\in V} F(0,0,y,\omega)$ and $\chi_E$ denotes the characteristic function of a set $E \subseteq \Rd$. In particular, with $k$ as above, $w(\cdot,\omega\,;V,F)$ satisfies
\begin{equation}\label{e.obst}
0 \leq F(D^2w,0,y,\omega) \leq k \chi_{\{ w=0\}} \quad \mbox{in} \ V. 
\end{equation}
Finally, we remark that if $F_1$ and $F_2$ are two operators satisfying our assumptions, then, for every $V$,
\begin{equation}\label{e.mono1}
F_1 \leq F_2 \qquad \mbox{implies that} \qquad w(\cdot,\omega\,;V,F_1) \geq w(\cdot,\omega\,;V,F_2). 
\end{equation}
This is immediate from the comparison principle, or alternatively from the characterization of $w$ as the minimal supersolution. 
The obstacle problem possesses a second monotonicity property, which is also immediate from either the comparison principle or the minimal supersolution characterization, which states that 
\begin{equation}\label{e.mono2}
V \subseteq W \qquad \mbox{implies that} \qquad w(\cdot,\omega\,;V,F) \leq w(\cdot,\omega\,;W,F) \quad \mbox{in} \ V.
\end{equation}

In part due to the right side of~\eqref{e.obst}, the set of points at which~$w$ vanishes plays an key role in what follows, and so we denote it by
\begin{equation*}\label{}
\Cs(\omega\,;V,F):= \left\{ y\in V \, : \, w(y,\omega\,;V,F) = 0 \right\},
\end{equation*}
We call $\Cs(\omega\,;V,F)$ the \emph{contact set} since it is the set where $w$ touches the obstacle. Its Lebesgue measure is a very important quantity, due to the sublinear structure it possesses, and we write 
\begin{equation}\label{cntctset}
\m(V,\omega\,;F): = \left|\Cs(\omega\,;V,F) \right|.
\end{equation}
The contact set inherits two monotonicity properties from the obstacle problem: namely that
\begin{equation}\label{e.mono1b}
F_1 \leq F_2 \qquad \mbox{implies that} \qquad \Cs(\omega\,;V,F_1) \subseteq \Cs(\omega\,;V,F_2)
\end{equation}
and
\begin{equation}\label{Cs-mono}
V \subseteq W \qquad \mbox{implies that} \qquad \Cs(\omega\,;W,F) \cap V \subseteq \Cs(\omega\,; V,F),
\end{equation}
which follow immediately from~\eqref{e.mono1} and~\eqref{e.mono2}, respectively.

The following proposition asserts that, on large scales, the contact set occupies a limiting proportion of the underlying domain, and this proportion is (almost surely) deterministic and does not depend on the domain. This is obtained by an application of the multiparameter subadditive ergodic theorem, and it is the most important limit we take (as well as the only use of the ergodic theorem) in the course of the proof of Theorem~\ref{H}. The argument is essentially the same as that of~(3.3) in~\cite{CSW}.

\begin{prop} \label{p.obstH}
There exists an event $\Cl[O]{obstHO}(F) \in \F$ of full probability and a deterministic constant $\overline\m (F)\in \R$ such that, for every $\omega\in \Cr{obstHO}(F)$ and $V \in \L$,
\begin{equation}\label{obstHe}
\lim_{t\to\infty} \frac{\m(tV,\omega\,;F)}{|tV|} = \overline\m(F).
\end{equation}
\end{prop}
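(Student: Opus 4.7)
The plan is to apply the multi-parameter subadditive ergodic theorem to the set function $V \mapsto \m(V,\omega\,;F)$. Three properties need to be checked: stationarity under $\tau$, subadditivity over disjoint decompositions, and uniform boundedness per unit volume.

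Stationarity follows from (F1) combined with uniqueness for the obstacle problem~\eqref{obst}: by (F1), the translated function $y \mapsto w(y-z, \tau_z\omega\,;V, F)$ satisfies the obstacle problem on $V+z$ with environment $\omega$, and by uniqueness this equals $w(y, \omega\,;V+z, F)$; hence $\Cs(\omega\,;V+z,F) = z + \Cs(\tau_z\omega\,;V,F)$ and $\m(V+z,\omega\,;F) = \m(V, \tau_z\omega\,;F)$. Subadditivity follows from the domain-monotonicity~\eqref{Cs-mono}: for disjoint $V_1, V_2 \in \L$ with $V_1 \sqcup V_2 = V$, applying~\eqref{Cs-mono} with $V_i \subseteq V$ yields $\Cs(\omega\,;V,F) \cap V_i \subseteq \Cs(\omega\,;V_i,F)$ for $i=1,2$; summing Lebesgue measures gives $\m(V,\omega\,;F) \leq \m(V_1,\omega\,;F) + \m(V_2,\omega\,;F)$. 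Boundedness is immediate: $0 \leq \m(V,\omega\,;F) \leq |V|$.

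Combined with the ergodicity of $\tau$ from~\eqref{erg}, these three properties place us in the setting of a multi-parameter subadditive ergodic theorem of Akcoglu--Krengel type, which produces a deterministic constant $\bar\m(F) \in [0,1]$ and a full-probability event on which $\m(W_n,\omega\,;F)/|W_n| \to \bar\m(F)$ for every Van Hove sequence $(W_n)$. For any bounded Lipschitz $V$ the dilates $(tV)_{t \to \infty}$ are Van Hove because the $r$-neighborhood of $\partial(tV)$ has volume $O(t^{\d-1}) = o(t^\d)$, which yields~\eqref{obstHe}.

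The content here is conceptual rather than technical: the ``free'' problem~\eqref{e.two} admits no useful subadditive structure, and the whole point of replacing it by the obstacle problem is that the contact-set volume $\m$ inherits subadditivity from the minimal-supersolution characterization embodied in~\eqref{e.mono1} and~\eqref{Cs-mono}. Once this identification is made the ergodic theorem is applied in a standard way; if one uses only a cube form of the theorem, a short sandwiching argument extends the limit from dilates of cubes to general Lipschitz domains, using the subadditivity of $\m$ against an inner cube tiling of $V$ for the upper bound on $\limsup \m(tV,\omega\,;F)/|tV|$ and the symmetric monotonicity of the complementary quantity $V \mapsto |V| - \m(V,\omega\,;F)$ for the matching lower bound.
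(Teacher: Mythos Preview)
Your proposal is correct and follows essentially the same route as the paper: verify stationarity, subadditivity, and boundedness of $\m$, then invoke a multiparameter subadditive ergodic theorem. The only cosmetic differences are that the paper cites the Dal~Maso--Modica version rather than Akcoglu--Krengel, and it handles the passage to general $V\in\L$ by first extending $\m$ to bounded Borel sets (via infima over Lipschitz supersets) so that the cited theorem applies directly, whereas you sketch the equivalent Van~Hove/sandwiching reduction.
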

\begin{proof}
We check that $\m$ satisfies the hypotheses of the multiparameter subadditive ergodic theorem (the version we refer to can be found in~Dal Maso and Modica~\cite{DM2}, see also the remarks following Proposition 2.2 in~\cite{AS}). 

Immediate from~\eqref{Cs-mono} is the subadditivity of~$\m$. That is, for all $V, V_1,\ldots, V_{k} \in \L$ such that $V_1,\ldots,V_k$ are pairwise disjoint, $\cup_{j=1}^k V_j \subseteq V$ and $| V \setminus \cup_{j=1}^k V_j | = 0$, we have
\begin{equation}\label{msa}
\m(V,\omega\,;F) \leq \sum_{j=1}^k m(V_j,\omega\,;F). 
\end{equation}
According to (F1), $\m$ is stationary. That is, for every $y\in \Rd$ and $V\in \L$,
\begin{equation*}\label{}
\m(V,\tau_y\omega\,;F) = \m(y+V,\omega\,;F).
\end{equation*}
We may easily extend the definition of $\m$ to the class $\BS$ of bounded Borel subsets of $\Rd$ by defining, for every $A \in \BS$, 
\begin{equation*}\label{}
\tilde \m(A,\omega\,;F):= \inf\left\{ \m(V,\omega\,;F)\,:\, V \in \L \ \mbox{and} \ A \subseteq V \right\}.
\end{equation*}
This extension agrees with $\m$ on $\L$ by~\eqref{Cs-mono} and the subadditivity and stationarity properties are preserved.

An application of~\cite[Proposition 1]{DM2} now yields the proposition.
\end{proof}

The limit~\eqref{obstHe} suffices to define the effective operator $\overline F$, but not to prove homogenization. We require something slightly more precise, namely that not only does the contact set, on large scales, occupy a limiting proportion of its domain, but it also spreads around evenly in the domain. The precise statement is the following lemma, which is obtained from~\eqref{Cs-mono} and~\eqref{obstHe}. The proof is essentially the same as~\cite[Lemma 3.4]{CSW}.

\begin{lem} \label{l.spread}
For every~$\omega\in\Cr{obstHO}(F)$ and $V,W \in \L$ with $\overline W \subseteq V$, 
\begin{equation}\label{e.limsubs}
\lim_{t\to \infty} \frac{ |\Cs(\omega\,;tV,F) \cap tW|}{|tW|} = \overline m(F).
\end{equation}
\end{lem}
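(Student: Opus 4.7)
The plan is to establish matching upper and lower bounds for the quantity $|\Cs(\omega;tV,F)\cap tW|/|tW|$ as $t\to\infty$, using only the monotonicity~\eqref{Cs-mono} and the limit~\eqref{obstHe} proved in Proposition~\ref{p.obstH}. Write $A_t:=\Cs(\omega;tV,F)$ for brevity.

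For the upper bound, I would apply~\eqref{Cs-mono} directly: since $\overline W\subseteq V$, we get $A_t\cap tW\subseteq \Cs(\omega;tW,F)$, so
\begin{equation*}
\frac{|A_t\cap tW|}{|tW|}\leq \frac{\m(tW,\omega;F)}{|tW|}\longrightarrow \overline\m(F),
\end{equation*}
by~\eqref{obstHe} applied to $W$ (valid on the event $\Cr{obstHO}(F)$).

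For the lower bound, the idea is to show that the portion of $A_t$ lying in $tV\setminus tW$ cannot exceed $\overline\m(F)\,|V\setminus\overline W|\,t^\d+o(t^\d)$, since otherwise the total mass of $A_t$ in $tV$ would exceed $\overline\m(F)|V|t^\d$, contradicting~\eqref{obstHe} on $V$. To make this quantitative, I would fix $\eta>0$ and choose pairwise disjoint $W_1,\ldots,W_k\in\L$ with $\overline{W_j}\subseteq V\setminus\overline W$ and $|V\setminus(\overline W\cup\bigcup_j W_j)|\leq\eta$; such a family exists because $V\setminus\overline W$ is open and bounded (e.g., use a Whitney-type decomposition into cubes and keep the finitely many whose closures lie in $V\setminus\overline W$ and together cover all but $\eta$ of its measure). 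Then
\begin{equation*}
|A_t\cap(tV\setminus tW)|\leq \sum_{j=1}^k |A_t\cap tW_j|+\Big|tV\setminus\Big(t\overline W\cup\bigcup_j tW_j\Big)\Big|\leq \sum_{j=1}^k\m(tW_j,\omega;F)+\eta\, t^\d,
\end{equation*}
using~\eqref{Cs-mono} for each $W_j$. Dividing by $t^\d$ and sending $t\to\infty$, Proposition~\ref{p.obstH} applied to each $W_j$ (still on the same full-probability event $\Cr{obstHO}(F)$, since it is a finite collection) yields
\begin{equation*}
\limsup_{t\to\infty}\frac{|A_t\cap(tV\setminus tW)|}{t^\d}\leq \overline\m(F)\sum_{j=1}^k|W_j|+\eta\leq \overline\m(F)|V\setminus\overline W|+\eta.
\end{equation*}
Combining this with $|A_t|/t^\d\to \overline\m(F)|V|$ and letting $\eta\to 0$ gives
\begin{equation*}
\liminf_{t\to\infty}\frac{|A_t\cap tW|}{t^\d}\geq \overline\m(F)|V|-\overline\m(F)|V\setminus\overline W|=\overline\m(F)|W|,
\end{equation*}
which matches the upper bound after dividing by $|tW|=t^\d|W|$.

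The main obstacle is simply the choice of the interior covering $\{W_j\}$ of $V\setminus\overline W$: it must consist of Lipschitz domains whose closures are compactly contained in $V$ so that Proposition~\ref{p.obstH} is applicable to each of them simultaneously on the single event $\Cr{obstHO}(F)$. This is purely geometric and poses no real difficulty since $V\setminus\overline W$ is open. Note that no new ergodic-theoretic input is required beyond Proposition~\ref{p.obstH}.
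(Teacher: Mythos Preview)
Your proof is correct and follows the same two-step strategy as the paper: obtain the upper bound on $W$ via the monotonicity~\eqref{Cs-mono}, then get the lower bound by bounding the mass of $A_t$ in the complement $V\setminus W$ from above and subtracting it from the total mass $|A_t|=\m(tV,\omega;F)$. The only difference lies in how the complement is handled. The paper simply sets $U:=V\setminus W$, asserts $U\in\L$, and applies the \emph{same} one-line monotonicity upper bound to $U$ that you applied to $W$; this immediately gives $\limsup_{t\to\infty}|A_t\cap tU|/|tU|\leq\overline m(F)$ and finishes the proof in three lines. You instead cover $V\setminus\overline W$ by finitely many Lipschitz subdomains $W_j$ up to an $\eta$-remainder, apply Proposition~\ref{p.obstH} to each $W_j$, and let $\eta\to 0$. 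Your route is a bit longer but sidesteps any concern about whether $V\setminus W$ (which need not be open or connected) genuinely lies in $\L$; the paper's route is shorter once that membership is granted.
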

\begin{proof}
Let $U:= V\setminus W\in \L$ and fix $\omega\in\Cr{obstHO}(F)$. Observe that~\eqref{Cs-mono} gives
\begin{equation}\label{e.spup}
\limsup_{t\to \infty} \frac{|\Cs(\omega\,;tV,F) \cap tW|}{|tW|} \leq\lim_{t\to \infty} \frac{|\Cs(\omega\,;tW,F)|}{|tW|} = \overline m(F).
\end{equation}
In the same way, we have
\begin{equation*}\label{}
\limsup_{t\to \infty} \frac{|\Cs(\omega\,;tV,F) \cap tU|}{|tU|} \leq \overline m(F).
\end{equation*}
Hence
\begin{multline*}\label{e.spdn}
\liminf_{t\to \infty} \frac{|\Cs(\omega\,;tV,F) \cap tW|}{|tW|} =\liminf_{t\to \infty} \frac{|\Cs(\omega\,;tV,F) \cap tV|-|\Cs(\omega\,;tV,F) \cap tU|}{|tW|} \\ \geq \left( \frac{|V|}{|W|} - \frac{|U|}{|W|} \right) \overline m(F) = \overline m(F).
\end{multline*}
Combined with~\eqref{e.spup}, this implies~\eqref{e.limsubs}.
\end{proof}

\subsection*{The effective nonlinearity}
We now define the effective operator $\overline F$ and discuss some of its elementary properties. It is prescribed in terms of the limiting proportions $\overline m\left(F_{M,p}-\alpha\right)$ given in Proposition~\ref{p.obstH}, where $\alpha\in \R$ and the operator $F_{M,p}:\Sy\times\Rd\times\Rd\times \O$ is define for each fixed $(M,p)\in \Sy\times\Rd$ by
\begin{equation}\label{}
F_{M,p}(N,q,y,\omega):= F_{M,p}(M+N,p+q,y,\omega).
\end{equation}
Note that each operator $F_{M,p}$ satisfies assumptions~(F1),~(F2) and~(F3) and so in particular Proposition~\ref{p.obstH} applies.
\begin{definition}

We define the effective nonlinearity $\overline F:\Sy\times \Rd\to\R$ by
\begin{equation}\label{Fbar}
\overline F(M,p):= \sup\left\{ \alpha \in \R \, : \, \overline \m\left(F_{M,p} - \alpha \right) > 0 \right\}.
\end{equation}
\end{definition}

To check that $\overline F$ is well-defined and finite, we first observe that, by the characterization of the obstacle problem solution $w$ as the minimal supersolution, we immediately obtain that
\begin{equation*}\label{}
\inf_{y\in V} F(0,0,y,\omega) \geq 0 \qquad \mbox{implies that} \qquad \Cs(\omega\,;V,F) = V
\end{equation*}
and
\begin{equation*}\label{}
\sup_{y\in V} F(0,0,y,\omega)  < 0  \qquad \mbox{implies that} \qquad \Cs(\omega\,;V,F) = \emptyset.
\end{equation*}
It follows from these that
\begin{equation}\label{easybnds}
\essinf_{\omega\in \Omega} F(M,p,0,\omega) \leq \overline F(M,p) \leq \esssup_{\omega\in \Omega} F(M,p,0,\omega).
\end{equation}
The first monotonicity property~\eqref{e.mono1b} of the obstacle problem implies that the map $\alpha \mapsto \overline m(F-\alpha)$ is a nonincreasing function and therefore 
\begin{equation}\label{e.pspp}
\alpha < \overline F(M,p) \quad \mbox{implies that} \quad\overline m\left(F_{M,p}-\alpha\right) > 0
\end{equation}
and
\begin{equation}\label{e.zpp}
\alpha > \overline F(M,p) \quad \mbox{implies that} \quad\overline m\left(F_{M,p}-\alpha\right) = 0.
\end{equation}
Also from~\eqref{e.mono1b} we see that, if $F_1$ and $F_2$ each satisfy~(F1),~(F2) and~(F3), then, for each $p\in \Rd$,
\begin{multline}\label{obvmono}
\sup_{(M,\omega)\in \Sy\times\O}  \left( F_1(M,p,0,\omega) - F_2(M,p,0,\omega) \right)\leq 0 \\ \mbox{implies that}\quad \overline F_1(\cdot,p) \leq \overline F_2(\cdot,p).
\end{multline}
It is also clear that adding constants commutes with the operation $F \mapsto \overline F$. From these facts a number of properties of $\overline F$ are immediate, the ones inherited from uniform properties of $F$. We summarized a few of these in the following lemma. 

\begin{lem}\label{Fbarprop}
For each $(M,p),(N,q)\in \Sy\times\Rd$,
\begin{equation}\label{Fdegell}
\Pu^-_{\lambda,\Lambda}(M-N) -\gamma|p-q| \leq \overline F(M,p) - \overline F(N,q) \leq \Pu^+_{\lambda,\Lambda}(M-N) + \gamma|p-q|.
\end{equation}
In particular, $\overline F$ is Lipschitz on $\Sy\times\Rd$. Moreover, if $F$ is positively homogeneous of order one, odd, or linear in one or both of the variables $(M,p)$, then $\overline F$ possesses the same property. 
\end{lem}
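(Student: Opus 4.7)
The strategy is that each asserted inheritance property reduces to the definition \eqref{Fbar} combined with a suitable monotonicity or symmetry of the obstacle problem. For the ellipticity estimate \eqref{Fdegell}, fix $(M,p),(N,q)\in\Sy\times\Rd$ and set $c:=\Pu^+_{\lambda,\Lambda}(M-N)+\gamma|p-q|$. Assumption (F2) yields, for every $(M',\omega)\in\Sy\times\O$, the pointwise bound $F_{M,p}(M',0,0,\omega)=F(M+M',p,0,\omega)\leq F(N+M',q,0,\omega)+c=(F_{N,q}+c)(M',0,0,\omega)$. Combining this with the identity $\overline{F_{M,p}}(0,0)=\overline F(M,p)$ (which follows from $(F_{M,p})_{0,0}=F_{M,p}$) and with the obvious commutation of $F\mapsto\overline F$ with the addition of constants, the monotonicity property \eqref{obvmono} applied with the gradient parameter set to $0$ gives $\overline F(M,p)\leq\overline F(N,q)+c$. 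Interchanging $(M,p)\leftrightarrow(N,q)$ and using $-\Pu^+_{\lambda,\Lambda}(-A)=\Pu^-_{\lambda,\Lambda}(A)$ completes \eqref{Fdegell}; Lipschitz continuity on $\Sy\times\Rd$ is then immediate since the Pucci operators are Lipschitz in $M$.

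For positive homogeneity of order one, the rescaling $w=t\widetilde w$ with $t>0$ transforms the obstacle problem for $F_{tM,tp}-\alpha$ into that for $F_{M,p}-\alpha/t$: homogeneity of $F$ gives $\min\{F(tM+tD^2\widetilde w,tp,y,\omega)-\alpha,\,t\widetilde w\}=t\min\{F(M+D^2\widetilde w,p,y,\omega)-\alpha/t,\,\widetilde w\}$. The contact sets of the two problems coincide, so Proposition~\ref{p.obstH} yields $\overline\m(F_{tM,tp}-\alpha)=\overline\m(F_{M,p}-\alpha/t)$, and \eqref{Fbar} gives $\overline F(tM,tp)=t\overline F(M,p)$. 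Linearity in one or both variables follows by combining this homogeneity (extended to negative scalars via the oddness step below) with an additivity identity $\overline F(M+N,p+q)=\overline F(M,p)+\overline F(N,q)$, which is obtained by applying the first-paragraph argument to the decomposition $F_{M+N,p+q}(M',q',y,\omega)=F_{M,p}(M',q',y,\omega)+F(N,q,y,\omega)$ valid for linear $F$, together with ergodic averaging of the resulting stationary $y$-dependent shift.

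The main obstacle is oddness, since the obstacle problem defining $\overline F$ is inherently one-sided ($w\geq 0$) and the reflection $w\mapsto -w$ does not preserve it. The plan is to introduce the analogous \emph{upper} obstacle problem $\max\{F_{M,p}(D^2u,0,y,\omega)-\alpha,\,u\}=0$ with $u\leq 0$ on the boundary, which by a word-for-word repetition of Proposition~\ref{p.obstH} and Lemma~\ref{l.spread} admits its own deterministic limiting contact-set proportion $\overline\m^\sharp$ and defines an upper effective operator $\overline F^\sharp(M,p):=\inf\{\alpha\in\R:\overline\m^\sharp(F_{M,p}-\alpha)>0\}$. The substitution $w=-u$ combined with oddness of $F$ converts the lower obstacle problem for $F_{-M,-p}-\alpha$ into the upper obstacle problem for $F_{M,p}+\alpha$ (using that $-F(-M-D^2u,-p,y,\omega)=F(M+D^2u,p,y,\omega)$), yielding $\overline F(-M,-p)=-\overline F^\sharp(M,p)$. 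The remaining identification $\overline F^\sharp=\overline F$ mirrors the argument identifying $\overline F$ in Section~\ref{PO}: both characterize the unique threshold value of $\alpha$ at which the free solutions $u_r$ of $F_{M,p}=\alpha$ on $B_r$ with zero boundary data satisfy $r^{-2}u_r(0)\to 0$ (a consequence of the perturbed test function machinery of Section~4 together with \eqref{e.two}). Combining these two identities gives $\overline F(-M,-p)=-\overline F(M,p)$, completing the proof.
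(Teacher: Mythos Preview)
Your argument for the ellipticity estimate \eqref{Fdegell} is correct and is exactly the paper's approach: compare $F_{M,p}$ and $F_{N,q}$ pointwise via (F2) and apply \eqref{obvmono}. Your positive homogeneity argument is also correct and simply makes explicit what the paper calls ``obvious.''

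There is, however, a genuine gap in your treatment of oddness. You correctly set up the upper obstacle problem and obtain $\overline F(-M,-p)=-\overline F^\sharp(M,p)$ when $F$ is odd. But under that same assumption (so $F^\star=F$), unwinding the definitions gives $\overline F^\sharp(M,p)=-\overline F(-M,-p)$ directly, so the identity $\overline F^\sharp=\overline F$ is \emph{equivalent} to the oddness of $\overline F$: you have restated the claim, not reduced it. Your proposed justification of $\overline F^\sharp=\overline F$---that both pick out the threshold $\alpha$ at which $r^{-2}u_r(0)\to 0$ for the free problem \eqref{e.two}, via ``the perturbed test function machinery of Section~4''---does not work here. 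The paper never proves that characterization of $\overline F$ via the free problem (it is sketched as motivation in Section~2 but then explicitly abandoned), and Section~4 itself relies on the present lemma (via Lemma~\ref{l.gcontrol}), so invoking it would be circular. The paper's actual route is the ABP argument of Lemma~\ref{l.odd}, proved immediately after the present lemma: assuming $\overline{F^\star}(M,p)<\alpha<\beta<(\overline F)^\star(M,p)$, one sums the two associated obstacle solutions on $B_r$ with the parabola $\frac{\beta-\alpha}{2d\Lambda}(r^2-|y|^2)$, checks that the sum is a subsolution of a Pucci inequality with right-hand side supported on the union of the two contact sets, and applies ABP together with the vanishing of both contact-set proportions to force the sum to be $o(r^2)$, contradicting the parabola's contribution. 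Specializing Lemma~\ref{l.odd} to odd $F$ (so $F^\star=F$) gives $\overline F=(\overline F)^\star$, which is exactly the oddness statement.

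Your linearity argument is also incomplete. When $F$ is linear one has $F_{M+N,p+q}=F_{M,p}+F(N,q,y,\omega)$ with a stationary $y$-dependent shift, but a $y$-dependent additive shift in the obstacle problem does not simply average out to its expectation at the level of the threshold defining $\overline F$; in any case there is no reason that $\E[F(N,q,0,\cdot)]=\overline F(N,q)$ for linear $F$. The paper deduces linearity instead from the already-established homogeneity and oddness, without passing through an additivity identity of this kind.
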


\begin{proof}
Each of the properties are proved using the comments before the statement of the proposition. To prove~\eqref{Fdegell}, we simply observe that, according to (F2), for all $(Y,y,\omega)\in \Sy\times\Rd\times\Omega$, $M,N\in \Sy$ with $M\leq N$, and $p,q\in \Rd$,
\begin{multline*}\label{}
F(N+Y,q,y,\omega) + \lambda \tr(N-M) - \gamma|p-q| \leq F(M+Y,p,y,\omega) \\ \leq F(N+Y,q,y,\omega) + \Lambda \tr(N-M) + \gamma|p-q|
\end{multline*}
and then apply~\eqref{obvmono}. It is obvious that $\overline F$ inherits the properties of positive homogeneity and oddness from $F$, and linearity follows from these. 
\end{proof}

Another property of the operation $F \mapsto \overline F$, which is less obvious than those of Lemma~\ref{Fbarprop}, is that it commutes with odd reflection. The odd reflection operator~$\od{F}$ is defined by
\begin{equation*}\label{}
\od{F}(M,p,y,\omega):= -F(-M,-p,y,\omega),
\end{equation*}
and it is straightforward to check that $\od{F}$ satisfies each of (F1), (F2) and (F3) if and only if $F$ does. Moreover, it is easy to see that odd reflection simply exchanges sub- and supersolutions, that is,
\begin{equation}\label{e.oddrefl}
F(D^2u,Du,y,\omega) \leq 0 \quad \iff \quad v:=-u \ \mbox{satisfies} \ \ \od{F}(D^2v,Dv,y,\omega) \geq 0.
\end{equation}
In the next lemma, we show that $F\mapsto \od{F}$ commutes with $F\mapsto \overline F$, a fact we use in the proof of Theorem~\ref{H}.

\begin{lem} \label{l.odd}
$\od{\overline F} = \od{\left(\overline F\right)}$.
\end{lem}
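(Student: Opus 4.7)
The plan is to reduce the claim to a scalar identity and then to exhibit a duality between the two obstacle problems (for $F+\alpha$ and for $(F+\alpha)^\star$) which forces the transition thresholds defining $\overline F(0,0)$ and $-\overline{F^\star}(0,0)$ to coincide.

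First, the claim unpacks as $\overline{F^\star}(M,p) = -\overline F(-M,-p)$. A direct computation from the definitions gives the operator identity $(F^\star)_{M,p} = (F_{-M,-p})^\star$, so replacing $F$ by $F_{-M,-p}$ reduces the problem to the scalar identity $\overline{F^\star}(0,0) = -\overline F(0,0)$ for every admissible $F$. Next, using the elementary identity $(H+\alpha)^\star = H^\star - \alpha$ and a sign substitution, the definition~\eqref{Fbar} rewrites as
\[
\overline{F^\star}(0,0) = \sup\{\alpha : \overline m((F+\alpha)^\star) > 0\}, \qquad -\overline F(0,0) = \inf\{\gamma : \overline m(F+\gamma) > 0\}.
\]
By the monotonicity~\eqref{e.mono1b}, both sides are well-defined transition thresholds of a non-increasing and a non-decreasing function respectively.

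The core step is the bound $\overline m(F+\alpha) + \overline m((F+\alpha)^\star) \geq 1$ for every $\alpha \in \R$. Let $w^+$ and $w^-$ denote the obstacle solutions on $B_r$ for $F+\alpha$ and $(F+\alpha)^\star$ respectively, and set $v := -w^-$. The reflection identity~\eqref{e.oddrefl} makes $w^+ \geq 0$ a viscosity supersolution and $v \leq 0$ a viscosity subsolution of the free equation $(F+\alpha)(D^2\cdot,\cdot)=0$ in $B_r$, both vanishing on $\partial B_r$. A strict comparison argument, using uniform ellipticity~(F2), the comparison principle from~(F3), and the spreading of the contact sets (Lemma~\ref{l.spread}), shows that the set $\{w^+ > 0\} \cap \{v < 0\}$ has vanishing asymptotic Lebesgue density in $B_r$; hence $\{w^+ = 0\} \cup \{v = 0\}$ covers $B_r$ up to a negligible set, and the bound follows after dividing by $|B_r|$ and invoking Proposition~\ref{p.obstH}.

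The bound immediately yields one direction: for $\alpha > \overline{F^\star}(0,0)$, one has $\overline m((F+\alpha)^\star) = 0$ by~\eqref{e.zpp}, so $\overline m(F+\alpha) \geq 1 > 0$, hence $\alpha \geq -\overline F(0,0)$ by~\eqref{e.pspp}. Thus $\overline{F^\star}(0,0) \geq -\overline F(0,0)$. The reverse inequality requires ruling out the possibility of a macroscopic $\alpha$-interval on which both $\overline m(F+\alpha)$ and $\overline m((F+\alpha)^\star)$ are strictly positive; this is achieved by a refined comparison argument, again leveraging Lemma~\ref{l.spread}. The \emph{main obstacle} is the strict comparison showing $|\{w^+ > 0\} \cap \{v < 0\}|/|B_r| \to 0$ together with the companion rigidity that closes the gap: both demand a careful interplay among uniform ellipticity, the viscosity comparison principle, and the spreading of the contact sets, and constitute the technical heart of the argument.
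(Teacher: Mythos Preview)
Your reduction to the scalar identity $\overline{F^\star}(0,0)=-\overline F(0,0)$ and the rewriting in terms of the thresholds of $\alpha\mapsto\overline m(F+\alpha)$ and $\alpha\mapsto\overline m((F+\alpha)^\star)$ are both correct and clean. The problem is that the two claims you label as the ``core step'' and the ``reverse inequality'' are asserted, not proved. For the covering bound $\overline m(G)+\overline m(G^\star)\geq 1$, you say a ``strict comparison argument'' forces $|\{w^+>0\}\cap\{v<0\}|/|B_r|\to 0$. But on that overlap set both $w^+$ and $v$ solve the \emph{same} free equation $G(D^2\cdot,\cdot)=0$, with $w^+>0>v$; the comparison principle only reproduces the inequality $w^+\geq v$ you already have and gives no mechanism whatsoever for the set to be small. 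Lemma~\ref{l.spread} controls how each contact set spreads individually, not how the complements of two different contact sets interact. The second claim (ruling out an interval where both densities are positive) is left entirely unspecified. You have correctly located where the difficulty lies, but you have not supplied an argument for either step.

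The paper's proof is quite different and does not attempt a covering statement. It argues by contradiction: if $\overline{F^\star}(M,p)<\alpha<\beta<(\overline F)^\star(M,p)$, then by~\eqref{e.zpp} both $\overline m(F^\star_{M,p}-\alpha)=0$ and $\overline m(F_{-M,-p}+\beta)=0$. One then forms the test function
\[
u_r(y)=w\!\left(y,\omega;B_r,F_{-M,-p}+\beta\right)+w\!\left(y,\omega;B_r,F^\star_{M,p}-\alpha\right)+\frac{\beta-\alpha}{2d\Lambda}\bigl(r^2-|y|^2\bigr),
\]
uses~(F2) and~\eqref{e.obst} to obtain $\Pu^-_{\lambda,\Lambda}(D^2u_r)\leq 2k\,\chi_{E_{1,r}\cup E_{2,r}}$ with $E_{i,r}$ the two contact sets, and applies the ABP inequality. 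Since both contact sets have vanishing density, ABP forces $r^{-2}u_r(0)\to 0$, contradicting the lower bound $r^{-2}u_r(0)\geq(\beta-\alpha)/(2d\Lambda)>0$ coming from the paraboloid. The essential idea you are missing is this use of ABP against a paraboloid of height $\beta-\alpha$; it converts smallness of the contact sets directly into smallness of the obstacle solutions, without any covering or disjointness of the noncontact sets.
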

\begin{proof}
Suppose on the contrary that, for some $(M,p)\in \Sy \times\Rd$ and $\alpha,\beta \in \R$,
\begin{equation*}\label{}
\od{\overline F}(M,p) < \alpha < \beta < \od{\left( \overline F\right) }\!(M,p).
\end{equation*}
That is, for $\alpha < \beta$, we have
\begin{equation*}\label{}
\od{\overline F}(M,p) < \alpha \quad \mbox{and} \quad \overline F(-M,-p) < -\beta.
\end{equation*}
According to~\eqref{e.zpp}, this implies that
\begin{equation}\label{e.cszero}
\overline m\left(\od{F}_{M,p} -\alpha \right) = \overline m\left( F_{-M,-p} + \beta\right) = 0.
\end{equation}
Fix $\omega\in \Cr{obstHO}\left(\od{F}_{M,p} -\alpha \right) \cap \Cr{obstHO}\left( F_{-M,-p} + \beta\right)$ and consider the function
\begin{multline}\label{e.ur}
u_r(y):=  w\!\left(y,\omega\,;B_r,F_{-M,-p} + \beta\right) + w\!\left(y,\omega\,;B_r,\od{F}_{M,p} -\alpha\right) \\ +  \frac{\beta-\alpha}{2\d\Lambda}\left( r^2 - |y|^2 \right).
\end{multline}
Denote the first two functions on the right side of~\eqref{e.ur} by $w_{1,r}(y)$ and $w_{2,r}(y)$, respectively. Since $w_{1,r},w_{2,r}\geq 0$, we clearly have
\begin{equation}\label{e.tocdct}
\liminf_{r\to \infty} r^{-2} u_r(0) \geq \frac{\beta-\alpha}{2\d\Lambda} > 0.
\end{equation}
Let $E_{1,r}:= \Cs(\omega\,;B_r,F_{-M,-p}+\beta)$ and $E_{2,r}:= \Cs(\omega\,;B_r,\od{F}_{M,p} -\alpha)$ denote the contact sets for $w_{1,r}$ and $w_{2,r}$, respectively. Formally, using~(F1),~\eqref{e.obst} and~\eqref{e.oddrefl}, we have
\begin{align*}\label{}
\Pu^-_{\lambda,\Lambda}(D^2u_r) & \leq \Pu^-_{\lambda,\Lambda}(D^2(w_{1,r} + w_{2,r})) + \Pu^+_{\lambda,\Lambda}\left( - \frac{\beta-\alpha}{\d\Lambda} I_\d \right)  \\
& \leq F(-M+D^2w_{1,r},-p,y,\omega) - F(-M-D^2w_{2,r},-p,y,\omega) + \left(\beta-\alpha \right)  \\
& \leq \left( -\beta + k \chi_{E_{1,r}}  \right) - \left( -\alpha - k\chi_{E_{2,r}}\right) + \left(\beta-\alpha \right) \\
& \leq 2k\chi_{E_{1,r} \cup E_{2,r}}
\end{align*}
in $B_r$, where $k:=\esssup_\Omega F(-M,-p,0,\cdot)$. This string of inequalities is rigorous (see for example the remarks in Section~2 of~\cite{AS} for a proof of the standard fact that inequalities are transitive in the viscosity sense). The ABP inequality (c.f.~\cite{CC}) applied to the function $\tilde u_r(x): = r^{-2} u_r(rx)$ then yields that 
\begin{equation*}\label{}
r^{-2} u_r(0) \leq C k r^{-1} \left( m(B_r,\omega\,;F_{-M,-p}+\beta) + m(B_r,\omega\,;\od{F}_{M,p} -\alpha) \right)^{1/\d}.
\end{equation*}
Sending $r\to \infty$ and using~\eqref{e.cszero}, we obtain that $\limsup_{r\to \infty} r^{-2} u_r(0) \leq 0$, which is in violation of~\eqref{e.tocdct}. 
\end{proof}

\section{The proof of homogenization}

In this section, we complete the proof of Theorem~\ref{H} using a modified perturbed test function argument based on the method introduced in the context of nonlinear homogenization by Evans~\cite{E}. 

In order to gain some control on the gradient of the approximate correctors, we modify the obstacle problem solution $w$ by introducing, for each $\delta > 0$, the infimal convolution approximation
\begin{equation}\label{e.wdel}
w^\delta(y,\omega\,;V,F):= \inf_{z\in V}\left\{ w(z,\omega\,;V,F) + \frac1{2\delta}|y-z|^2 \right\}.
\end{equation}
The function $w^\delta$ satisfies the differential inequality
\begin{equation}\label{e.wdeleq}
F(D^2w^\delta,Dw^\delta,y,\omega) \geq -c_\delta \quad \mbox{in} \ V_{s_\delta} : = \left\{ y\in V \, : \, \dist(y,\partial V) > s_\delta \right\},
\end{equation}
for $c_\delta, s_\delta >0$ such that $c_\delta, s_\delta \to 0$ as $\delta \to 0$. This is routine to check using the elementary properties of infimal convolution and (F3), and we refer to~\cite{CIL} for details. 

An important property of the functions $w^\delta(\cdot,\omega\,;V,F)$ is that they are locally semiconcave, and therefore locally Lipschitz and differentiable Lebesgue almost everywhere in $V$. In fact, they are differentiable at any point at which they can be touched from below by a smooth function. See~\cite{CIL} for details.

It is immediate from~\eqref{e.wdel} and the nonnegativity of $w$ that $0 \leq w^\delta \leq w$ and the infimal convolution leaves the contact set undisturbed, that is,
\begin{equation}\label{e.csunch}
\left\{ y \in V\,:\, w^\delta(y,\omega\,;V,F) = 0 \right\} = \left\{ y \in V\,:\, w(y,\omega\,;V,F) = 0 \right\} = \Cs(\omega\,;V,F).
\end{equation}
This implies in particular that $Dw^\delta(\cdot,\omega\,;V,F)$ exists and vanishes on $\Cs(\omega\,;V,F)$. We next present a generalization of this fact, stating that we can control $Dw^\delta$ in terms of the distance to the contact set. Since the contact set ``spreads," this will prove to be useful.

\begin{lem}
At any point $y\in V$ at which $w^\delta(\cdot,\omega\,;V,F)$ is differentiable,
\begin{equation}\label{e.gradcs}
\left|Dw^\delta(y,\omega\,;V,F)\right| \leq \frac1{\delta} \dist\left( y, \Cs(\omega\,;V,F) \right).
\end{equation}
\end{lem}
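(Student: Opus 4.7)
The plan is to exploit the standard envelope formula for infimal convolutions: if $y^\ast \in V$ achieves the infimum in the definition~\eqref{e.wdel} of $w^\delta(y,\omega\,;V,F)$, then
\[
w^\delta(z) \leq w(y^\ast) + \frac{1}{2\delta}|z - y^\ast|^2 \quad \text{for all } z \in V,
\]
with equality at $z = y$. Thus the right-hand side is a smooth function touching $w^\delta$ from above at $y$, and differentiability of $w^\delta$ at $y$ forces
\[
Dw^\delta(y,\omega\,;V,F) = \frac{y - y^\ast}{\delta}.
\]
Hence $|Dw^\delta(y,\omega\,;V,F)| = \delta^{-1}|y - y^\ast|$, and the problem reduces to showing $|y - y^\ast| \leq \dist(y,\Cs(\omega\,;V,F))$.

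For this second step, I would use the nonnegativity of $w$ together with the fact that $w$ vanishes on the contact set. On the one hand, choosing $z \in \Cs(\omega\,;V,F)$ as a competitor in~\eqref{e.wdel} (and noting $w(z)=0$) gives
\[
w^\delta(y,\omega\,;V,F) \leq \frac{1}{2\delta}|y - z|^2,
\]
so optimizing over $z \in \Cs(\omega\,;V,F)$ yields
\[
w^\delta(y,\omega\,;V,F) \leq \frac{1}{2\delta}\dist\bigl(y,\Cs(\omega\,;V,F)\bigr)^2.
\]
On the other hand, since $w \geq 0$, the value at the minimizer $y^\ast$ satisfies
\[
w^\delta(y,\omega\,;V,F) = w(y^\ast,\omega\,;V,F) + \frac{1}{2\delta}|y-y^\ast|^2 \geq \frac{1}{2\delta}|y - y^\ast|^2.
\]
Combining the two bounds gives $|y - y^\ast| \leq \dist(y,\Cs(\omega\,;V,F))$, and substituting into the envelope formula produces~\eqref{e.gradcs}.

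There is no real obstacle here; the only subtlety is justifying the envelope formula $Dw^\delta(y) = (y - y^\ast)/\delta$ at a point of differentiability, but this is immediate from the smooth-upper-bound argument above (or can be quoted from~\cite{CIL}). I would also mention, as a sanity check, that in the limiting case $y \in \Cs(\omega\,;V,F)$ we may take $y^\ast = y$, recovering the already-noted fact that $Dw^\delta$ vanishes on the contact set. The minimizer $y^\ast$ exists by lower semicontinuity of $w$ and compactness (or by boundedness of $V$ and coercivity of the quadratic penalty), so the argument is complete.
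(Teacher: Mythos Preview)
Your proof is correct and follows essentially the same approach as the paper. Both arguments locate a minimizer $y^\ast$ (the paper calls it $z$), use nonnegativity of $w$ and its vanishing on $\Cs$ to deduce $|y-y^\ast|\le \dist(y,\Cs)$, and then convert this into the gradient bound via the touching-from-above paraboloid; the only cosmetic difference is that you invoke the envelope identity $Dw^\delta(y)=(y-y^\ast)/\delta$ directly, whereas the paper bounds $\sup_{B_r(y)}(w^\delta-w^\delta(y))$ and passes to the limit $r\to 0$.
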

\begin{proof}
For simplicity, we suppress the dependence of our functions and sets on $(\omega,V,F)$. Since $w$ vanishes on the boundary of $V$, the infimum in~\eqref{e.wdel} is attained at some point $z \in \bar V$, and by comparing $z$ to the nearest point to $y$ at which $w$ vanishes, we deduce
\begin{equation*}\label{}
w^\delta(y) = w(z) + \frac1{2\delta}|z-y|^2 \leq \frac1{2\delta}\left( \dist\left( y, \Cs \right) \right)^2.
\end{equation*}
In particular, since $w\geq 0$,
\begin{equation*}\label{}
|z-y| \leq \dist\left( y, \Cs\right).
\end{equation*}
If $y'\in B_r(y)$, then we have
\begin{equation*}\label{}
w^\delta(y') \leq w(z) + \frac1{2\delta}|z-y'|^2 \leq w^\delta(y) + \frac{1}{2\delta}\left( |y-y'|^2 + 2 |z-y||y-y'| \right)
\end{equation*}
and thus
\begin{equation*}\label{}
\sup_{B_r(y)} \left( w^\delta - w^\delta(y) \right) \leq \frac{1}{\delta}\left( \frac 12 r^2 + \dist(y,\Cs) r \right)
\end{equation*}
Dividing by $r$ and sending $r\to 0$ yields the lemma.
\end{proof}

The standard H\"older estimates and~\eqref{e.gradcs}, combined with Lemma~\ref{l.spread}, yield the following result. It is~\eqref{e.wdelsq} which asserts that the $w^\delta$'s are ``flat enough" for use in the perturbed test function method, and~\eqref{e.gcontrol} which permits us to handle gradient dependent equations in the proof of Lemma~\ref{l.pertstp} below. Essentially, the lemma states that the functions $w^\delta$ are ``good enough approximate correctors." 

Before giving the lemma, we reveal the identity of the event $\Omega_0$ in the statement of Theorem~\ref{H}. We define $\Omega_0$ to be the intersection, over all $M\in \Sy$, $p\in \Rd$ and $a\in \R$, with rational entries, of the events $\Cr{obstHO}(F_{M,p}-a)$ and $\Cr{obstHO}(\od{F}_{M,p}-a)$. It is clear that $\P[\Omega_0] = 1$ since $\Omega_0$ is the countable intersection of events of full probability.

\begin{lem}\label{l.gcontrol}
Suppose that $(M,p)\in \Sy\times \Rd$ and $a\in \R$ are such that $\overline F(M,p)>a$. Then, for each $V\in \L$, $\omega\in\Omega_0$ and $\delta > 0$,
\begin{equation}\label{e.wdelsq}
\limsup_{\ep\to 0} \, \ep^2 \sup_{y\in \frac1\ep V} \left|w^\delta\!\left(y,\omega\,;\frac 1\ep V,F_{M,p}-a\right)\right| = 0
\end{equation}
and
\begin{equation}\label{e.gcontrol}
\limsup_{\ep \to 0} \sup_{y\in \frac1\ep V} \, \ep \left| Dw^\delta\left(y,\omega\,;\frac1\ep V,F_{M,p}-a\right) \right| = 0.
\end{equation}
\end{lem}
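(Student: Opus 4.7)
The plan is to reduce both \eqref{e.wdelsq} and \eqref{e.gcontrol} to a single quantitative spread-of-contact-set estimate, and then to derive the latter from Lemma~\ref{l.spread} by a finite covering argument. The key observation I would exploit is that the proof of \eqref{e.gradcs} actually yields the pointwise inequalities
\begin{equation*}
0 \leq w^\delta(y) \leq \frac{1}{2\delta}\,\dist\!\bigl(y,\{w=0\}\bigr)^2, \qquad |Dw^\delta(y)| \leq \frac{1}{\delta}\,\dist\!\bigl(y,\{w=0\}\bigr),
\end{equation*}
where $\{w=0\} = \Cs \cup \partial(\frac{1}{\ep}V)$. It would therefore suffice to show
\begin{equation*}
\limsup_{\ep \to 0}\,\sup_{y \in \frac{1}{\ep}V}\, \ep\,\dist\!\bigl(y,\, \Cs(\omega;\tfrac{1}{\ep}V,F_{M,p}-a)\cup \partial(\tfrac{1}{\ep}V)\bigr) = 0,
\end{equation*}
since multiplying by $\ep^2$ and $\ep$ respectively and inserting the pointwise bounds then gives uniform decay.

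As a preliminary reduction, I would replace $(M,p,a)$ by a nearby rational triple, needed because $\omega \in \Omega_0$ only provides the good event $\Cr{obstHO}(F_{M',p'}-a')$ for rational data. The set $\{(M,p,a) : \overline F(M,p) > a\}$ is open by Lemma~\ref{Fbarprop}, and (F2) yields $F_{M,p}-F_{M',p'} \geq \Pu^-_{\lambda,\Lambda}(M-M') - \gamma|p-p'|$; hence I would pick rational $(M',p',a')$ near $(M,p,a)$ satisfying $\overline F(M',p') > a'$ and $F_{M,p}-a \geq F_{M',p'}-a'$ pointwise. By the operator-monotonicity \eqref{e.mono1b}, $\Cs(\omega;\frac{1}{\ep}V,F_{M,p}-a) \supseteq \Cs(\omega;\frac{1}{\ep}V,F_{M',p'}-a')$, so the spread estimate for the rational operator implies the same for the original.

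For the rational operator, fix $r>0$ and cover the open set $V_r := \{z \in V : \dist(z,\partial V) > r\}$ by finitely many balls $B_r(z_1), \ldots, B_r(z_N)$ with $\overline{B_r(z_j)} \subset V$. Since $\overline F(M',p') > a'$ gives $\overline m(F_{M',p'}-a') > 0$ via \eqref{e.pspp}, Lemma~\ref{l.spread} applied with $W = B_r(z_j)$ produces, for all $\ep$ small enough (depending on $r$), a contact point inside each $\frac{1}{\ep}B_r(z_j)$. A given $y \in \frac{1}{\ep}V$ then either has $\dist(y,\partial(\frac{1}{\ep}V)) \leq r/\ep$, or has $\ep y \in V_r \subseteq \bigcup_j B_r(z_j)$ and therefore $\dist(y,\Cs) \leq 2r/\ep$; in either case $\ep \cdot \dist(y, \Cs \cup \partial(\frac{1}{\ep}V)) \leq 2r$. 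Since $r > 0$ was arbitrary, the spread estimate, and with it both \eqref{e.wdelsq} and \eqref{e.gcontrol}, follows. The main obstacle is really this covering step; its one delicate aspect is the rational-approximation bookkeeping required to apply Lemma~\ref{l.spread} through $\Omega_0$.
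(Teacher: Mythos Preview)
Your argument is correct and follows the same skeleton as the paper's proof: cover $V$ by small balls, invoke Lemma~\ref{l.spread} together with~\eqref{e.pspp} to place a contact point in each rescaled ball, and then convert the resulting distance-to-contact-set bound into the two limits; the rational reduction you sketch is exactly what the paper means in its last paragraph when it appeals to~\eqref{e.mono1} and~\eqref{e.mono1b}.

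There is one genuine, if minor, difference in how~\eqref{e.wdelsq} is obtained. The paper bounds $w$ itself (not $w^\delta$) near a contact point using the interior $C^\alpha$ estimate applied to~\eqref{e.obst}, which yields the $\delta$-independent bound $\ep^2 w \leq C\eta^\alpha$; see~\eqref{e.smak1}. You instead reuse the pointwise inequality $w^\delta(y) \leq \tfrac{1}{2\delta}\dist(y,\{w=0\})^2$, extracted from the proof of~\eqref{e.gradcs}, and thereby avoid the regularity theory altogether at the cost of a $\delta$-dependent constant (harmless here since $\delta$ is fixed). Your route is a bit more elementary and unifies both limits under a single distance estimate; the paper's route gives a sharper, $\delta$-free bound on $\ep^2 w^\delta$ via $w^\delta \leq w$. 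You are also slightly more careful than the paper about the boundary: by including $\partial(\tfrac{1}{\ep}V)$ in the zero set and covering only $V_r$ with balls compactly contained in $V$, you respect the hypothesis $\overline W \subseteq V$ of Lemma~\ref{l.spread}, whereas the paper's covering of $\overline V$ glosses over this point.
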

\begin{proof}
We first prove the lemma for $(M,p,a)$ with rational entries, and in this case we may assume with no loss of generality that $M=0$, $p=0$ and $a=0$.

Let $\eta > 0$ and select $x_1,\ldots,x_N \in V$ such that $\overline V$ is covered by the collection of balls $\{ B_{\eta}(x_j)\}_{j=1}^N$. According to~Lemma~\ref{l.spread},~\eqref{e.pspp} and the assumption that $a<\overline F(M,p)$, there exists $\ep(\eta) > 0$ such that, for every $0 < \ep <\ep(\eta)$,
\begin{equation}\label{e.cprus}
\Cs\left( \omega\,;\frac1\ep V,F \right) \cap \frac1\ep B_\eta(x_j) \neq \emptyset \quad \mbox{for every} \ j\in \{1,\ldots,N\}.
\end{equation}
Since $0\leq w^\delta \leq w$ and $w$ vanishes on the contact set, the standard $C^\alpha$ estimates (c.f.~\cite{CC}), properly scaled  and applied to $w$, using~\eqref{e.obst}, yield that, for some $\alpha >0$ and every $0< \ep < \ep(\eta)$,
\begin{equation}\label{e.smak1}
 \ep^2\sup_{y\in  V} \ \left| w^\delta\!\left(y,\omega\,;\frac 1\ep V,F\right) \right| \leq  \ep^2\sup_{y\in  V} \  w\! \left(y,\omega\,;\frac 1\ep V,F\right)  \leq C \eta^\alpha
\end{equation}
Letting $\ep \to 0$ and then $\eta\to 0$ in~\eqref{e.smak1} yields~\eqref{e.wdelsq}.
We also deduce from~\eqref{e.cprus} that, for every $0 < \ep<\ep(\eta)$ and $y\in \frac 1\ep V$,
\begin{equation}\label{e.ctsted}
\dist\left( y, \Cs\left(\omega\,; \frac1\ep V,F\right) \right) \leq \frac{2\eta}{\ep}.
\end{equation}
From~\eqref{e.gradcs} we deduce that, for every $0 < \ep<\ep(\eta)$,
\begin{equation*}
\sup_{y\in \frac 1\ep V} \, \ep \left|Dw^\delta\left(y,\omega\,;\frac1\ep V,F\right)\right| \leq \frac{2\eta}{\delta}.
\end{equation*}
We send $\ep \to 0$ and then $\eta\to 0$ to obtain~\eqref{e.gcontrol}. This completes the argument in the case that $(M,p,a)$ has rational entries. By the continuity of $\overline F$ given in Lemma~\ref{Fbarprop} and using \eqref{e.mono1} and~\eqref{e.mono1b}, we still have both~\eqref{e.smak1} and~\eqref{e.ctsted} with $F$ replaced by $F_{M,p}-a$ with arbitrary $(M,p,a) \in \Sy\times\Rd\times\R$. We may then conclude by arguing as above.
\end{proof}

The main step in the perturbed test function argument is encapsulated by the following lemma (the reader is encouraged to skip it and first read the proof of Theorem~\ref{H}). We remark that if $F$ does not depend on $p$, then the argument can be simplified further, since in this case we have no use for~\eqref{e.gcontrol} and we may use $w$ instead of $w^\delta$.

\begin{lem} \label{l.pertstp}
Fix $\omega\in \Omega_0$, $x_0\in \Rd$, $r_0> 0$ and $\phi\in C^\infty(B_{r_0}(x_0))$ and set $M:=D^2\phi(x_0)$ and $p:=D\phi(x_0)$. Also fix $a < \overline F(M,p)$ and define, for each $\delta, \ep > 0$,
\begin{equation}\label{e.pert}
\phi^{\ep,\delta}(x):= \phi(x) + \ep^2 w^\delta\left( \frac x\ep, \omega\,;\frac1\ep B_{r_0}(x_0),F_{M,p} -a \right).
\end{equation}
Let $\eta > 0$. Then there exists $0 <  r  < r_0$ and $\delta_0>0$ so that for each $0 < \delta < \delta_0$ there exists $\ep_0(\delta)> 0$ such that, for each $0 < \ep < \ep_0(\delta)$, the perturbed test function~$\phi^{\ep,\delta}$ satisfies the inequality
\begin{equation*}\label{e.pertstp}
F\left(D^2\phi^{\ep,\delta}, D\phi^{\ep,\delta},\frac x\ep,\omega \right) \geq a - \eta  \quad \mbox{in} \ B_{r}(x_0).
\end{equation*}
\end{lem}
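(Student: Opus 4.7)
My plan is to establish that $\phi^{\ep,\delta}$ is a viscosity supersolution of $F(D^2u, Du, x/\ep, \omega) \geq a - \eta$ on $B_r(x_0)$. Fix $\psi \in C^2$ touching $\phi^{\ep,\delta}$ from below at some $\hat{x} \in B_r(x_0)$ and set $\hat{y} := \hat{x}/\ep$; the goal is to verify $F(D^2\psi(\hat{x}), D\psi(\hat{x}), \hat{y}, \omega) \geq a-\eta$. Because $w^\delta$ is semiconcave (being the infimal convolution of a bounded nonnegative function), the touching from below forces both $\phi^{\ep,\delta}$ (at $\hat{x}$) and $w^\delta$ (at $\hat{y}$) to be differentiable, with $D\psi(\hat{x}) = D\phi(\hat{x}) + \ep\, Dw^\delta(\hat{y})$. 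The rescaled test function $\varphi(y) := \ep^{-2}(\psi(\ep y) - \phi(\ep y))$ then touches $w^\delta$ from below at $\hat{y}$, and satisfies $D\varphi(\hat{y}) = Dw^\delta(\hat{y})$ and $D^2\varphi(\hat{y}) = D^2\psi(\hat{x}) - D^2\phi(\hat{x})$.

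Applying the approximate supersolution property~\eqref{e.wdeleq} of $w^\delta$ for the operator $F_{M,p} - a$---which is legitimate once $\ep$ is small enough that $\hat{y}$ lies in the interior region $V_{s_\delta}$---yields
\[
F\bigl(M + D^2\psi(\hat{x}) - D^2\phi(\hat{x}),\; p + Dw^\delta(\hat{y}),\; \hat{y},\; \omega\bigr) \;\geq\; a - c_\delta.
\]
I then invoke (F2) to bridge this with $F(D^2\psi(\hat{x}), D\psi(\hat{x}), \hat{y}, \omega)$. The Hessian discrepancy $D^2\phi(\hat{x}) - M$ is $O(r)$ by the smoothness of $\phi$ near $x_0$, and the gradient discrepancy decomposes as
\[
D\psi(\hat{x}) - p - Dw^\delta(\hat{y}) \;=\; \bigl(D\phi(\hat{x}) - p\bigr) - (1-\ep)\,Dw^\delta(\hat{y}),
\]
whose first summand is $O(r)$ from smoothness while the second summand is the genuinely new difficulty introduced by the $p$-dependence.

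Controlling the $(1-\ep)|Dw^\delta(\hat{y})|$ contribution is the main obstacle---exactly what the infimal-convolution construction was introduced to address. The tools are the pointwise Lipschitz estimate~\eqref{e.gradcs}, $|Dw^\delta(\hat{y})| \leq \delta^{-1}\dist(\hat{y},\Cs)$, together with the quantitative spreading used inside Lemma~\ref{l.gcontrol}: for every covering scale $\eta_1 > 0$, $\dist(\hat{y},\Cs) \leq 2\eta_1/\ep$ uniformly on the rescaled domain, provided $\ep$ is small enough in terms of $\eta_1$ and $\omega$. Parameters are then fixed in the order $r \to \delta \to \ep$: first $r$ is chosen small so that the Hessian and $|D\phi(\hat{x}) - p|$ errors contribute at most $\eta/3$ via~(F2); next $\delta_0$ is chosen so that $c_\delta \leq \eta/3$ for $\delta < \delta_0$; finally, given such $\delta$, $\ep_0(\delta)$ is chosen small enough that the matching covering scale $\eta_1 = \eta_1(\ep,\delta)$ required to enforce $\gamma(1-\ep)|Dw^\delta(\hat{y})| \leq \eta/3$ is also small enough for the spreading condition to apply. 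This final self-consistent scale balance---coupling the infimal-convolution's Lipschitz bound with the spreading of the contact set---is the delicate step, and is precisely the novelty that pushes~\cite{CSW} through in the $p$-dependent case. Summing the three error contributions yields $F(D^2\psi(\hat{x}), D\psi(\hat{x}), \hat{y}, \omega) \geq a - \eta$, which completes the proof.
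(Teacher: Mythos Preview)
Your overall strategy mirrors the paper's, but there is a genuine error in how you invoke the supersolution property of $w^\delta$. Recall that the obstacle problem~\eqref{e.o} freezes the gradient variable at $0$: the function $w(\cdot\,;V,F_{M,p}-a)$ is a supersolution of $(F_{M,p}-a)(D^2w,0,y,\omega)\geq 0$, i.e., of $F(M+D^2w,\,p,\,y,\omega)\geq a$. Passing to the infimal convolution preserves this, up to the $c_\delta$ error coming from the $y$-dependence. Hence the correct inequality at the touching point is
\[
F\bigl(M + D^2\psi(\hat x) - D^2\phi(\hat x),\; p,\; \hat y,\; \omega\bigr)\;\geq\; a - c_\delta,
\]
with $p$ in the gradient slot, not $p+Dw^\delta(\hat y)$. (The display~\eqref{e.wdeleq} in the paper is written for a generic operator and is perhaps misleading; the proof of the lemma, at~\eqref{e.pmrplug}, makes the intended reading explicit.)

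This matters decisively. With the correct inequality, the gradient gap you must absorb via~(F2) is
\[
\bigl|D\psi(\hat x)-p\bigr|\;\leq\;\bigl|D\phi(\hat x)-p\bigr|+\bigl|D\psi(\hat x)-D\phi(\hat x)\bigr|\;=\;O(r)+\ep\,\bigl|Dw^\delta(\hat y)\bigr|,
\]
and Lemma~\ref{l.gcontrol} gives exactly $\ep|Dw^\delta(\hat y)|\to 0$ as $\ep\to 0$. By contrast, your version forces you to control $(1-\ep)|Dw^\delta(\hat y)|$, i.e., essentially $|Dw^\delta(\hat y)|$ itself. The bound~\eqref{e.gradcs} and the spreading estimate only give $|Dw^\delta(\hat y)|\leq 2\eta_1/(\ep\delta)$, which blows up as $\ep\to 0$. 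Your ``self-consistent scale balance'' would require $\eta_1\ll \ep$ simultaneously with $\ep<\ep(\eta_1)$, and there is no reason these constraints are compatible (nothing quantitative is known about how $\ep(\eta_1)$ depends on $\eta_1$). So the argument as written does not close; once the gradient slot is corrected to $p$, it does, and coincides with the paper's proof.
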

\begin{proof}
Fix $\eta > 0$ and select $\psi \in C^\infty(B_{r_0}(x_0))$ and a point $x_1\in B_r(x_0)$, with $0<r<r_0$ to be determined below, such that
\begin{equation*}\label{}
x\mapsto \left(\phi^{\ep,\delta} - \psi\right) (x) \quad \mbox{has a strict local minimum at} \ x=x_1.
\end{equation*}
Expressing this in terms of $w^\delta$, we find that
\begin{multline}\label{e.wdtouch}
y\mapsto w^\delta\left(y,\omega\,;\frac1\ep B_{r_0}(x_0) , F_{M,p} -a  \right) - \frac1{\ep^2} \left( \psi( \ep y) - \phi(\ep y) \right) \\ \mbox{has a local minimum at} \ y = y_1:= \frac{x_1}{\ep}. 
\end{multline}
We fix $\delta_0>0$ small enough that, for each $0<\delta \leq \delta_0$, the constants~\eqref{e.wdeleq} satisfy $s_\delta \leq r_0 -r$ and $c_\delta \leq \frac12\eta$. Then for such $\delta$ we have
\begin{equation}\label{e.pmrplug}
F \! \left( M+D^2\psi(x_1) - D^2\phi(x_1) , p , \frac{x_1}{\ep},\omega  \right) \geq a - \frac12\eta. 
\end{equation}
Since $\phi$ is smooth, for small $r>0$ we have
\begin{equation*}\label{}
|D^2\phi(x_0)- D^2\phi(x_1)| \leq r  \left( \sup_{B_{r_0}(x_0)} |D^3\phi| \right),
\end{equation*}
which can be made as small as desired by shrinking $r$, and a similar bound holds for $|D\phi(x_0) - D\phi(x_1)|$. Observe that~\eqref{e.wdtouch} implies that $w^\delta\left(\cdot,\omega\; \frac1\ep B_{r_0}(x_0), F_{M,p} -a \right)$ is differentiable at~$y_1$ and
\begin{equation}\label{e.gradplug}
|D\phi(x_1) - D\psi(x_1)| = \ep \left |Dw^\delta\left(y_1,\omega\; \frac1\ep B_{r_0}(x_0), F_{M,p} -a \right) \right|.
\end{equation}
The quantity on the right of~\eqref{e.gradplug} is bounded from above by a quantity which tends to zero as $\ep\to 0$ (at a rate which depends on $\delta$) by Lemma~\ref{l.gcontrol}, which is applicable by the assumption that $a < \overline F(M,p)$. Therefore, these considerations and \eqref{e.pmrplug} together with the uniform continuity assumption in (F3) imply that if~$r>0$ and~$\delta>0$ are small enough then, for all sufficiently small $\ep>0$ (depending on $\delta$), we have
\begin{equation*}\label{}
F \! \left( D^2\psi(x_1) , D\psi(x_1) , \frac{x_1}{\ep},\omega  \right) \geq a  - \eta.
\end{equation*}
This completes the proof.
\end{proof}

We now complete the proof of the main result.

\begin{proof}[{\bf Proof of Theorem~\ref{H}}]
We fix  $\omega\in \Omega_0$, a bounded Lipschitz domain $U\in \L$ and $g \in C(\partial U)$. We first argue that, for every $x\in U$,
\begin{equation}\label{Hup}
\tilde u(x):= \limsup_{\ep \to 0} u^\ep(x,\omega_0) \leq u(x). 
\end{equation}
By the comparison principle, to prove~\eqref{Hup} it suffices to check that $\tilde u$ satisfies  
\begin{equation}\label{tiluss}
\left\{ \begin{aligned}
& \bar F(D^2\tilde u,D\tilde u) \leq 0 & \mbox{in} & \ U,\\
& \tilde u \leq g & \mbox{on} & \ \partial U.
\end{aligned} \right.
\end{equation}
That $\tilde u  = g$ on $\partial U$ is obtained by a routine barrier argument. To verify the PDE in~\eqref{tiluss}, we select a smooth test function $\phi\in C^2(U)$ and a point $x_0\in U$ such that
\begin{equation}\label{e.maxpt}
x \mapsto \left( \tilde u - \phi \right)(x) \quad \mbox{has a strict local maximum at} \quad x = x_0.
\end{equation}
We must show that $\overline F(D^2\phi(x_0), D\phi(x_0)) \leq 0$, and so arguing on the contrary, we set $M:=D^2\phi(x_0)$ and $p:=D\phi(x_0)$ and suppose that $\theta:= \overline F(M,p )  > 0$.

Since the local maximum of $\tilde u-\phi$ at $x_0$ is strict, there exists $r_0>0$ such that $B_{r_0}(x_0) \subseteq U$ and, for every $0 < r \leq r_0$,
\begin{equation}\label{stroom}
\left(\tilde u - \phi\right)(x_0) > \sup_{\partial B_r(x_0)} \left(\tilde u - \phi\right).
\end{equation}
Fix $\delta > 0$ to be selected below and let $\phi^{\ep,\delta}$ be as in~\eqref{e.pert} with $a:=\frac 12\theta$. By the definition of $\tilde u$ and~\eqref{e.wdelsq}, for each $0 < r \leq r_0$, there exists $\ep_r > 0$ such that, for every $0 < \ep < \ep_r$ and $r\leq s\leq r_0$,
\begin{equation}\label{stroom2}
\left(u^\ep - \phi^{\ep,\delta}\right)(x_0) > \sup_{\partial B_s(x_0)} \left(u^\ep - \phi^{\ep,\delta}\right).
\end{equation}
However, according to Lemma~\ref{l.pertstp}, for small enough $\delta, r,\ep > 0$ the function~$\phi^{\ep,\delta}$ satisfies the inequality
\begin{equation}\label{e.phiep}
F\left(D^2\phi^{\ep,\delta}, D\phi^{\ep,\delta}, \frac x\ep, \omega_0 \right) \geq \frac 14 \theta \quad \mbox{in} \ B_{r}(x_0).
\end{equation}
In light of the equation satisfied by $u^\ep$, this gives the desired contradiction, since it renders~\eqref{stroom2} in violation of the comparison principle. 

To prove that $\liminf_{\ep \to 0} u^\ep(x,\omega_0) \geq u(x)$, we simply replace $u^\ep$ and $u$ by $-u^\ep$ and $-u$, apply Lemma~\ref{l.odd}, and argue as above. 

We have shown that $\lim_{\ep \to 0} u^\ep(x,\omega) = u(x)$ for all $x\in \bar V$. The H\"older estimates applied to each function $u^\ep(\cdot,\omega)$ imply that this limit must hold uniformly in $\bar V$. 
\end{proof}

\subsection*{Acknowledgements}
SNA was partially supported by NSF Grant DMS-1004645 and by a Chaire Junior of la Fondation Sciences Math\'ematiques de Paris. CKS was partially supported by NSF Grant DMS-1004595.

\small
\bibliographystyle{plain}
\bibliography{graddep}

\end{document}